\newtheoremstyle{case}{}{}{}{}{}{:}{ }{}
\theoremstyle{case}
\newcommand{\be}{\begin{equation}}
\newcommand{\ee}{\end{equation}}
\newcommand{\ben}{\begin{eqnarray*}}
\newcommand{\een}{\end{eqnarray*}}
\newtheorem{examp}{\sc Example}
\newtheorem{remk}{\sc Remark}
\newtheorem{corol}{\sc Corollary}
\newtheorem{lemma}{\sc Lemma}
\newtheorem{theorem}{\sc Theorem}
\newtheorem{defn}{\sc Definition}
\newcommand{\bt}{\begin{theorem}}
\newcommand{\et}{\end{theorem}}
\newcommand{\bl}{\begin{lemma}}
\newcommand{\el}{\end{lemma}}
\newcommand{\bed}{\begin{defn}}
\newcommand{\eed}{\end{defn}}
\newcommand{\brem}{\begin{remk}}
\newcommand{\erem}{\end{remk}}
\newcommand{\bex}{\begin{examp}}
\newcommand{\eex}{\end{examp}}
\newcommand{\bcl}{\begin{corol}}
\newcommand{\ecl}{\end{corol}}
\newcommand{\NI}{\noindent}
\newcommand{\vsp}{\vskip 0.5em}
\newtheorem{thm}{Theorem}[subsection]
\newtheorem{lem}[thm]{Lemma}
\theoremstyle{definition}
\theoremstyle{remark}
\numberwithin{equation}{section}
\numberwithin{theorem}{section}
\numberwithin{lemma}{section}
\begin{document}
 \title{{Error Bound for the Linear Complementarity Problem using Plus Function}}
\author{Bharat Kumar$^{a, 1}$, Deepmala$^{b, 2}$, A. Dutta$^{c, 3}$ and A. K. Das$^{d, 4}$\\
\emph{\small $^{a,b}$Department of Mathematics, PDPM-Indian Institute of Information Technology}\\
\emph{\small Design and Manufacturing, Jabalpur - 482005 (MP), India}\\
\emph{\small $^{c}$Department of Mathematics, Jadavpur University, Kolkata, 700 032, India}\\
\emph{\small $^{d}$SQC \& OR Unit, Indian Statistical Institute, Kolkata, 700 108, India}\\
\emph{\small $^{1}$Email: bharatnishad.kanpu@gmail.com}\\
\emph{\small $^{2}$Email: dmrai23@gmail.com}\\
\emph{\small $^{3}$Email: aritradutta001@gmail.com}\\
\emph{\small $^{4}$Email: akdas@isical.ac.in} \\
 }
\date{}
\maketitle

\date{}
\maketitle
\begin{abstract}
	\noindent In this article we establish error bound for linear complementarity problem with $P$-matrix using plus function.  We introduce a fundamental quantity associated with a $P$-matrix and show how
this quantity is useful in deriving error bounds for the linear complementarity
problem of the $P$-type. We also obtain (upper and lower) bounds for the quantity introduced. 
\end{abstract}
\NI{\bf Keywords:} Linear complementarity problem, plus function, error bound, relative error bound. \\
\NI{\bf AMS subject classifications:} 90C33, 15A39, 15A24, 15A60, 65G50. 

\footnotetext[1] {Corresponding author}

\section{Introduction}
The error bounds are important to consider a measure
by which the approximate solution fails to be in the
solution set and to obtain the convergence rates of
different approaches. The error bound decides stopping criteria in terms of convergence analysis for iterative method. It also plays an important role in sensitivity analysis. In linear complementarity problem, use of error bound is focused not only for the bounds of the solution but also for the convergence rate of the iterative method applied to find the solution. Mathias and Pang \cite{mathias1990error} established error bound for linear complementarity problem with $P$-matrix. Here we propose a new error bound for linear complementarity problem with $P$-matrix based on plus function.

The linear complementarity problem finds a real valued vector that satisfies a particular system of inequalities and a complementarity condition. The problem is defined as follows:
 
 Given $A\in R^{n \times n}$ and a vector $q \in R^n $. Now consider the linear complementarity problem as to find a vector $z \in R^n$ such that 
 \begin{equation}\label{1}
     Az+q \geq 0, \ \ \ z \geq 0,
     \end{equation}
     \begin{equation}\label{2}
         z^T(Az+q)=0.
 \end{equation}
  This problem is denoted as LCP$(A,q)$ for which we find a vector $z\in R^n $ satisfying the  inequalities (\ref{1}) as well as complementarity condition (\ref{2})  or show that no such $z$ exists.
  The feasible set of LCP$(A,q)$ is defined as FEA$(A,q)$ $=\{z:Az+q \geq 0, z \geq 0\}$ and the solution set is defined as SOL$(A,q)$ $=\{z \in FEA(A,q):z^T(Az+q)=0\}$.
\\
 
For the recent study on the linear complementarity problem and applications see \cite{das2017finiteness}, \cite{articlee14}, \cite{bookk1}, \cite{articlee7} and references therein. For details of several matrix classes in complementarity theory, see \cite{articlee1}, \cite{articlee2}, \cite{articlee9}, \cite{articlee17}, \cite{article1}, \cite{mohan2001more}, \cite{article12}, \cite{article07}, \cite{dutta2022column} and references cited therein. The problem of computing the value vector and optimal stationary strategies for structured stochastic games for discounted and undiscounded zero-sum games and quadratic multi-objective programming problem are formulated as linear complementary problems. For details see \cite{articlee18}, \cite{mondal2016discounted}, \cite{neogy2005linear} and \cite{neogy2008mixture}. The complementarity problems are considered with respect to principal pivot transforms and pivotal method to its solution point of view. For details see \cite{articlee8}, \cite{articlee10}, \cite{das1} and \cite{neogy2012generalized}.

For the LCP$(A, q)$, one important issue is to study the related error bound, which is an inequality that bounds the distance from vectors to the solution set of the LCP$(A,q)$, in terms of some residual function. Now consider $S=$SOL$(A,q)$. The nonegative function $\bar{r}:S \to R_{+}$ is said to be a residual function for LCP$(A,q)$ if it satisfies the property that $\bar{r}(z)=0$
be a residual function for the LCP$(A, q)$ if it satisfies the property that $\bar{r}(z) = 0$
if and only if $z \in S$.
In recent years, many researchers are concerned with
global error bounds for the linear complementarity problem and related
mathematical programs. For details see \cite{mathias1990error}, \cite{chen2006computation}, \cite{garcia2010comparison} and \cite{dai2013new}.

In this paper we consider linear complementarity problem with $P$-matrix. Our aim is to derive global upper and lower error bounds of solution of LCP$(A,q)$ using plus function. In section 2,  some basic notations and results are presented. In section 3, we derive upper and lower bounds of solution of LCP$(A,q)$.  In the last  section, give some examples to illustrate the bounds of LCP$(A,q)$.

\section{Preliminaries}
 We begin by introducing some basic notations used in this paper. We consider matrices and vectors with real entries. $R^n$  denotes the $n$ dimensional real space, $R^n_+$ and $R^{n}_{++}$ denote the nonnegative and positive orthant of $R^n.$ We consider vectors and matrices with real entries. Any vector $x\in R^{n}$ is a column vector and  $x^{T}$ denotes the row transpose of $x.$ $e$ denotes the vector of all $1.$ $x_i$ denotes the $i$-th component of the vector $x\in R^n.$\\
For a given $z\in R^n$, we define $\|z\|_{\infty}=\max\limits_{i}|z_i|$.\\
For a given matrix $A \in R^{n \times n}$, we define $\|A\|_{\infty}=\max\limits_{i,j}|A_{ij}|$.\\
 Let ${\|z\|^2 _A}=\max\limits_{ i }z_{i}(Az)_{i}$. Now the quantity $\beta(A)$ is defined as
\begin{equation}\label{def1}
    \beta(A)=\min\limits_{\|z\|_{\infty}=1} \|z\|^{2}_A
\end{equation} which is finite and positive.  For an arbitrary vector $z \in R^n$,
the following inequality holds:

\begin{equation}\label{eq2}
 \max\limits_{i} ~z_{i}(Az)_{i}\geq~ \beta(A)\|z\|^{2}_{\infty}.
\end{equation}
For details see \cite{mathias1990error}.
 Now we give some definitions and lemmas which will be used in the next section.
 \begin{defn}
  Consider $z \in R$. Then the plus function is defined as  $z_{+}=\max(0,z)$. 
 \end{defn}
For $z\in R^n$, the plus function $z_{+}$ is also defined as $(z_{+})_{i}=(z_i)_{+} \ \forall \ i,$ where $(z_i)_{+}=\max(0,z_i).$
From the definition of $z_{+}$ it is clear that $z_{+}\geq 0.$

\begin{defn}\cite{pang1995complementarity}
$A \in R^{n \times n}$ is called $P$-matrix if and only if for every  $z\in R^n \backslash \{0\}$
\begin{equation}\label{eq1}
 \max_{ i }z_{i}(Az)_{i} ~ > ~ 0;
 \end{equation}
\end{defn}
\begin{lem}\label{lem1}\cite{mathias1990error}
Let $A\in R^{n \times n}$ be  a $P$-matrix and let $x $ denote the unique
solution of LCP$(A,q)$. Then
\begin{equation}\label{eq7}
{\beta(A^{-1})}\|(- q)_{+}\|_{\infty} \leq \|z\|_{\infty}\leq  {\beta(A)^{-1}}\|(- q)_{+}\|_{\infty}.
\end{equation}
\end{lem}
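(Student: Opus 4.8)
The plan is to combine two elementary structural facts about the unique solution $z$ with the fundamental inequality (\ref{eq2}). Writing $w := Az + q$, feasibility gives $z \ge 0$ and $w \ge 0$, while the complementarity condition $z^{T}w = 0$ forces $z_i w_i = 0$ for each $i$; hence $z_i(Az)_i = z_i(w_i - q_i) = -q_i z_i$ for all $i$. Since $z_i \ge 0$, we have $-q_i z_i \le (-q_i)_+ z_i \le \|(-q)_+\|_{\infty}\,\|z\|_{\infty}$, so
\begin{equation*}
\max_i z_i(Az)_i = \max_i(-q_i z_i) \le \|(-q)_+\|_{\infty}\,\|z\|_{\infty}.
\end{equation*}
This single estimate will drive both inequalities.

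For the upper bound I would apply (\ref{eq2}) to $A$ at the vector $z$ itself, which gives $\max_i z_i(Az)_i \ge \beta(A)\|z\|_{\infty}^{2}$. Combining this with the displayed estimate yields $\beta(A)\|z\|_{\infty}^{2} \le \|(-q)_+\|_{\infty}\|z\|_{\infty}$, and cancelling one factor of $\|z\|_{\infty}$ produces $\|z\|_{\infty} \le \beta(A)^{-1}\|(-q)_+\|_{\infty}$.

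For the lower bound the key idea is to apply the same inequality, but to the matrix $A^{-1}$ (which is again a $P$-matrix, so $\beta(A^{-1})$ is finite and positive) evaluated at the vector $Az$. Since $A^{-1}(Az)=z$, this reads
\begin{equation*}
\max_i z_i(Az)_i = \max_i (Az)_i\,\bigl(A^{-1}(Az)\bigr)_i \ge \beta(A^{-1})\,\|Az\|_{\infty}^{2}.
\end{equation*}
It then remains to bound $\|Az\|_{\infty}$ below by $\|(-q)_+\|_{\infty}$: from $w \ge 0$ we get $Az = w - q \ge -q$ componentwise, so $(Az)_i \ge -q_i$, and a two-case check on the sign of $-q_i$ shows $|(Az)_i| \ge (-q_i)_+$ for every $i$, whence $\|Az\|_{\infty} \ge \|(-q)_+\|_{\infty}$. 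Feeding this in and again invoking $\max_i z_i(Az)_i \le \|(-q)_+\|_{\infty}\|z\|_{\infty}$ gives $\beta(A^{-1})\|(-q)_+\|_{\infty}^{2} \le \|(-q)_+\|_{\infty}\|z\|_{\infty}$, and cancelling a factor of $\|(-q)_+\|_{\infty}$ delivers $\beta(A^{-1})\|(-q)_+\|_{\infty} \le \|z\|_{\infty}$.

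I expect the only genuinely nontrivial manoeuvre to be the lower bound: one must spot that (\ref{eq2}) should be tested against $Az$ rather than $z$, and that $A^{-1}$ inherits the $P$-matrix property so that $\beta(A^{-1})$ is legitimate. The remaining care concerns the degenerate cases that justify the cancellations: if $\|(-q)_+\|_{\infty}=0$ then $q \ge 0$, so $z=0$ by uniqueness and both inequalities hold trivially, and if $\|z\|_{\infty}=0$ the upper bound is immediate. Thus each division above is performed precisely when the cancelled quantity is nonzero.
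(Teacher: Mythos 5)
The paper does not actually prove Lemma \ref{lem1}: it is stated with a citation to Mathias and Pang \cite{mathias1990error} and imported without argument, so there is no in-paper proof to compare against. Your proposal is correct, and it is essentially a reconstruction of the original argument from that reference: complementarity gives $z_i(Az)_i=-q_iz_i$, which with $z\ge 0$ yields $\max_i z_i(Az)_i\le \|(-q)_+\|_{\infty}\|z\|_{\infty}$; the upper bound then follows by testing (\ref{eq2}) at $z$, and the lower bound by testing the analogous inequality for $A^{-1}$ (again a $P$-matrix, so $\beta(A^{-1})>0$) at the vector $Az$, combined with $Az=w-q\ge -q$ and hence $\|Az\|_{\infty}\ge\|(-q)_+\|_{\infty}$. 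You also correctly isolate the two places where a cancellation is performed and dispose of the degenerate cases ($\|z\|_{\infty}=0$ for the upper bound; $\|(-q)_+\|_{\infty}=0$, where $q\ge 0$ forces $z=0$ by uniqueness, for the lower bound), so the argument is complete with no gaps.
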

Now our aim is to bound the error for the linear complementarity problem LCP$(A,q)$, where $A\in R^{n \times n}$ is a $P$-matrix.
\section{Main Results}

\begin{theorem}\label{lem2}
Let $A\in R^{ n \times n}$ be a $P$-matrix. Let $z$ denote the unique solution
of LCP$(A,q)$ and let $d \in R^n$ be an arbitrary vector. Then, 

\begin{equation}\label{eqq8}
    \frac{1}{1+\|A\|_{\infty}}\|(d-(d-(Ad+q))_{+})\|_{\infty}\leq \|z-d\|_{\infty}\leq \frac{1+\|A\|_{\infty}} {\beta(A)}\|(d-(d-(Ad+q))_{+})\|_{\infty}.
\end{equation}
\end{theorem}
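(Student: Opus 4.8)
The plan is to recognize the quantity $d-(d-(Ad+q))_{+}$ as the natural residual of the LCP evaluated at $d$. Writing $w=Az+q$ and $\tilde w=Ad+q$, a componentwise computation gives $\big(d-(d-\tilde w)_{+}\big)_i=\min(d_i,\tilde w_i)$, so I will abbreviate $r(d):=d-(d-(Ad+q))_{+}$, with $r(d)_i=\min(d_i,\tilde w_i)$. The first preparatory step is to record the fixed-point characterization of the solution: since $z\ge 0$, $w\ge 0$ and $z^{T}w=0$ force $z_iw_i=0$ for every $i$, we have $\min(z_i,w_i)=0$, i.e. $r(z)=0$. Thus the theorem asserts that $\|z-d\|_{\infty}$ is squeezed between two multiples of $\|r(d)\|_{\infty}=\|r(d)-r(z)\|_{\infty}$.

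For the lower bound I will use that the scalar map $(a,b)\mapsto\min(a,b)$ is nonexpansive in the sense $|\min(a_1,b_1)-\min(a_2,b_2)|\le\max(|a_1-a_2|,|b_1-b_2|)$, which follows from $\min(a_2+s,b_2+t)\le\min(a_2,b_2)+\max(s,t)$ for $s,t\ge 0$ (equivalently, from the nonexpansiveness of the plus function). Applying this to $r(d)_i-r(z)_i=\min(d_i,\tilde w_i)-\min(z_i,w_i)$ and using $|d_i-z_i|\le\|z-d\|_{\infty}$ together with $|\tilde w_i-w_i|=|(A(d-z))_i|\le\|A\|_{\infty}\|z-d\|_{\infty}$ gives $\|r(d)\|_{\infty}\le(1+\|A\|_{\infty})\|z-d\|_{\infty}$, which is exactly the left-hand inequality.

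For the upper bound I will set $u=z-d$ and invoke inequality (\ref{eq2}) with the choice $y=u$, namely $\max_i u_i(Au)_i\ge\beta(A)\|u\|_{\infty}^{2}$. Since $A(z-d)=w-\tilde w$, the product $u_i(Au)_i$ equals $(z_i-d_i)(w_i-\tilde w_i)$, so it suffices to prove the pointwise bound $(z_i-d_i)(w_i-\tilde w_i)\le(1+\|A\|_{\infty})\,\|r(d)\|_{\infty}\,\|z-d\|_{\infty}$ for each $i$. Feeding this into (\ref{eq2}) yields $\beta(A)\|z-d\|_{\infty}^{2}\le(1+\|A\|_{\infty})\,\|r(d)\|_{\infty}\,\|z-d\|_{\infty}$, and dividing by $\|z-d\|_{\infty}$ (the case $z=d$ being trivial) gives the right-hand inequality. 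Here the $P$-matrix hypothesis enters only through (\ref{eq2}), and the bound $\|Au\|_{\infty}\le\|A\|_{\infty}\|u\|_{\infty}$ is used throughout.

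The technical heart, and the step I expect to be the main obstacle, is this pointwise estimate, which I will establish by a case analysis driven by complementarity. For each $i$ either $z_i=0$ or $w_i=0$; within each case I split according to whether $d_i\le\tilde w_i$ or $d_i>\tilde w_i$, so that $r(d)_i$ equals whichever of $d_i,\tilde w_i$ is the smaller. In the subcases where the relevant coordinate of $r(d)$ coincides with $d_i$ (respectively $\tilde w_i$), one factor of the product is controlled directly by $|r(d)_i|\le\|r(d)\|_{\infty}$ while the other is controlled by $|z_i-d_i|\le\|z-d\|_{\infty}$ or by $|w_i-\tilde w_i|=|(Au)_i|\le\|A\|_{\infty}\|z-d\|_{\infty}$; in the remaining subcases the sign constraints $z_i,w_i\ge 0$ force the a-priori ``large'' factor to again be bounded by $\|r(d)\|_{\infty}$. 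Every branch therefore produces a bound of the form $\max(1,\|A\|_{\infty})\,\|r(d)\|_{\infty}\,\|z-d\|_{\infty}\le(1+\|A\|_{\infty})\,\|r(d)\|_{\infty}\,\|z-d\|_{\infty}$, and keeping the book-keeping of signs straight across the subcases is the only genuinely delicate part of the argument.
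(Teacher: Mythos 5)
Your proposal is correct, and it takes a genuinely different route from the paper's proof. The paper handles the upper bound by constructing an auxiliary complementary pair: setting $p=d-(d-(Ad+q))_{+}$, it forms $s=d-p=(d-(Ad+q))_{+}\geq 0$ and $c=q+(A-I)p+As$, verifies by cases that $c\geq 0$ and $s^{T}c=0$, and then uses $(s-z)_i(c-l)_i\leq 0$ (with $l=Az+q$) to extract the key inequality $(d-z)_i(A(d-z))_i\leq (d-z)_ip_i+p_i(A(d-z))_i$, which at the maximizing index is combined with (\ref{eq2}) exactly as in your final step; you instead prove the pointwise bound $u_i(Au)_i\leq (1+\|A\|_{\infty})\|r(d)\|_{\infty}\|u\|_{\infty}$ directly from $z_iw_i=0$ and the representation $r(d)_i=\min\bigl(d_i,(Ad+q)_i\bigr)$, avoiding the auxiliary pair altogether. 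I checked your case analysis: with either $z_i=0$ or $w_i=0$, crossed with which of $d_i,\tilde w_i$ attains the min, each branch closes (in the two ``mismatched'' subcases the signs $z_i\geq 0$, $w_i\geq 0$ let you discard a nonpositive term or force the large factor below $\|r(d)\|_{\infty}$), so the sketch is sound. For the lower bound the paper runs a sign analysis on $p_i$ split by $l_i=0$ versus $z_i=0$; your nonexpansiveness argument for the min function is cleaner, needs no cases on the sign of $p_i$, and in fact yields the slightly sharper constant $\max(1,\|A\|_{\infty})$ in place of $1+\|A\|_{\infty}$. One caution common to both arguments: your repeated use of $\|Au\|_{\infty}\leq \|A\|_{\infty}\|u\|_{\infty}$ (and the paper's own step $((I+A)(d-z))_i\,p_i\leq (1+\|A\|_{\infty})\|p\|_{\infty}\|d-z\|_{\infty}$) is valid only if $\|A\|_{\infty}$ is the induced norm, i.e.\ the maximum absolute row sum, whereas Section 2 literally defines $\|A\|_{\infty}=\max_{i,j}|A_{ij}|$, for which this submultiplicativity fails in general; so on this point you are implicitly correcting a defect in the paper's stated definition rather than introducing a gap of your own.
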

\begin{proof}
 Let $p=(d-(d-(Ad+q))_{+})$ and $l=q+Az$. Consider the vector 
 \begin{center}
 $s=d-p=d-(d-(d-(Ad+q))_{+})=(d-(Ad+q))_{+}\geq 0$.\\
 \end{center}
 Let $c=q+(A-I)p+As.$ This implies that
 \begin{equation*}
 c=q+(A-I)p+As=q+Ap-p+Ad-Ap=q-p+Ad\\
   =q-(d-(d-(Ad+q))_{+})+Ad=q-d+(d-(Ad+q))_{+}+Ad.
 \end{equation*}
 Now we show that the vectors $c$ and $s$ satisfy complementarity condition. \\If $d_i\geq (Ad+q)_i,$ \begin{center}
 $((d-(Ad+q))_{i})_{+}=d_i-(Ad+q)_i$.
 \end{center}
 Then 
 \begin{center}
     $c_i=q_i-d_i+d_i-(Ad+q)_i+(Ad)_i=0$ and $s_i=((d-(Ad+q))_{i})_{+}=d_i-(Ad+q)_i\geq 0.$
     \end{center}
     In another way, if $ d_i\leq (Ad+q)_i, $
     \begin{center}$((d-(Ad+q))_{i})_{+}=0.$ 
     \end{center}
     This implies that 
     \begin{center}
     $s_i=0$ and $c_i=q_i-d_i+((d-(Ad+q))_{i})_{+}+(Ad)_{i}=(Ad)_i+q_i-d_i\geq 0.$ 
     \end{center}    
     Considering both the cases we obtain the following pair of inequalities and complementarity condition.
 
 \begin{equation}
     s\geq 0,~~~~ c=q+(A-I)p+As\geq 0, 
     \end{equation}
     \begin{equation}
     s^Tc=0.
 \end{equation}
 Now, for each $i$ we have
 \begin{center}
 $(s-z)_i(c-l)_i=s_i c_i+z_i l_i-z_i c_i-s_i l_i\leq 0$.
 \end{center}
 Therefore, we have\\
 \begin{equation*}
 0\geq (s-z)_i(c-l)_i=(d-p-z)_i(q+Ap-p+As-q-Az)_{i}     = (d-p-z)_i(q+Ap-p+Ad-Ap-q-Az)_{i}= (d-p-z)_i (-p+A(d-z))_{i}\\
          =-(d-z)_i p_i-p_i (A(d-z))_i + {p_i}^2+(d-z)_i (A(d-z))_i\\
     geq -(d-z)_ip_i-p_i(A(d-z))_i+(d-z)_i(A(d-z))_i.
 \end{equation*}
  This implies that,\\
  \begin{center}
 $(d-z)_i(A(d-z))_i\leq  (d-z)_i p_i+p_i(A(d-z))_i$
 \end{center}
 
 \NI In particular for the index $i$, we have\\
 \begin{center}
 $(d-z)_i(A(d-z))_i=\max\limits_j(d-z)_j(A(d-z))_j$.
 \end{center}
 Now from the condition ($\ref{eq2}$), we have\\
 \begin{center}
 $\max\limits_{i} ~z_{i}(Az)_{i}\geq~ \beta(A)\|z\|^{2}_{\infty}.$
 \end{center}
 Hence 
 \begin{center}
    $(d-z)_i(A(d-z))_i\geq \beta(A)\|d-z\|^{2}_{\infty}$.
    \end{center}
 Therefore,
 
 \begin{equation*}
 \beta(A)\|d-z\|^{2}_{\infty}\leq (d-z)_i(A(d-z))_i\leq (d-z)_i p_i + p_i(A(d-z))_i \\
 =((I+A)(d-z))_i \ p_i \leq (1+\|A\|_{\infty})\|p\|_{\infty}\|d-z\|_{\infty}.
 \end{equation*}

 Hence 
 \begin{center}
     $\|d-z\|^{2}_{\infty}\leq\frac{(1+\|A\|_{\infty})}{\beta(A)}\|p\|_{\infty}\|d-z\|_{\infty}.$
     \end{center}
 \vsp
To prove the left-hand inequality of ($\ref{eqq8}$), consider an arbitrary index $i$ for which
 $p_i > 0$ and  $l_i = 0$. Then $(Az)_i=-q_i.$ \\
In this case, if $p_i>0,$ then 
\begin{center}
    $p_i=d_i-((d-(Ad+q))_+)_i = d_i-((d-(Ad+q))_i)_+>0$. 
    \end{center}
Since $l_i=0,$ the inequality $(Ad+q)_i\geq d_i$, implies that 
\begin{center}
$p_i=d_i\leq (Ad+q)_i=(A(d-z))_i$ 
\end{center}
and  the another inequality
 $(Ad+q)_i\leq d_i$, implies that
 \begin{equation*}
 p_i=d_i-d_i+(Ad+q)_i=(Ad+q)_i=(A(d-z))_i. 
 \end{equation*}
 Hence considering the case $p_i > 0$ and  $l_i = 0$, we obtain \\
 \begin{equation*}
 \lvert p_i \rvert=p_i \leq (A(d-z))_i \leq \|A\|_\infty \|(d-z)\|_\infty.
\end{equation*}\\
If $z_i = 0$, then $s=d-p\geq 0$ implies that $p_i \leq d_i.$ \\
Therefore,
\begin{equation*}
\lvert p_i \rvert=p_i \leq d_i-z_i \leq \|(d-z)\|_{\infty},
 \text{when} z_i = 0, p_i>0. 
 \end{equation*}
Thus considering all the cases, we conclude that \\
\begin{center}
$|p_i|\leq (1+\|A\|_\infty)\|(d-z)\|_\infty$.
\end{center}
Now we consider the case $p_i\leq 0.$\\
Let $l_i=0.$ Then 
\begin{center}
$p_i=d_i-((d-(Ad+q))_+)_i = d_i-((d-(Ad+q))_i)_+<0.$
\end{center}
Considering both the cases $(Ad+q)_i\leq d_i$ and $(Ad+q)_i\geq d_i$, we obtain\\
\begin{center}
$|p_i|=-p_i\geq -(A(d-z))_i$, which implies that \\
$|p_i|\leq (A(d-z))_i\leq \|A\|_\infty \|(d-z)\|_\infty.$
\end{center}
If $z_i = 0$, then $|p_i|=-p_i\geq -d_i+z_i $, which implies that $|p_i|\leq \|(d-z)\|_{\infty}.$
Hence considering the cases $p_i<0, l_i=0, z_i=0$, we conclude that \\
\begin{center}
$|p_i|\leq (1+\|A\|_\infty)\|(d-z)\|_\infty$, where $p_i=(d-(d-(Ad+q))_+)_i.$
\end{center} 
This inequality $|p_i|\leq (1+\|A\|_\infty)\|(d-z)\|_\infty$ implies that
\begin{center}
$\|(d-z)\|_\infty\geq$ $\frac{|p_i|}{(1+\|A\|_\infty)}$, for an arbitrary index $i$.
\end{center}
Hence 
\begin{center}
$\|(d-z)\|_\infty\geq$ $\frac{\|p\|}{(1+\|A\|_\infty)}$. 
\end{center}
Hence we conclude that 
\begin{equation*}
\frac{\|d-(d-(Ad+q))_+\|_{\infty}}{(1+\|A\|_\infty)}\leq \|(z-d)\|_\infty \leq 
\frac{(1+\|A\|_{\infty})}{\beta(A)}\|d-(d-(Ad+q))_+\|_{\infty}.
\end{equation*}
\end{proof}
\begin{remk}
The quantity $\|(d-(d-(Ad+q))_{+})\|_{\infty}$ in the expression ($\ref{eqq8}$) is the residue of
the vector $d$. When $d = 0$, this residue is equal to
the quantity $\|( - q)_+\|_{\infty} $. Now we deduce the relative error bound.
 
\end{remk}
\begin{theorem}\label{lem3}
 Let $A \in R^{n \times n}$ be a $P$-matrix. Let $z$ denote the unique
solution of LCP$(A,q)$ and let $d\in R^n $ be an arbitrary vector. Assume that
$(- q)_+ \neq 0$. Then,

\begin{equation*}
    \frac{\beta(A)}{1+\|A\|_{\infty}}\frac{\|(d-(d-(Ad+q))_{+})\|_{\infty}}{\|( - q)_+\|_{\infty}}
\end{equation*}

\begin{equation}\label{eq9}
\leq \frac{\|z-d\|_{\infty}}{\|z\|_{\infty}}\leq \frac{1+\|A\|_{\infty}}{\beta(A^{-1})\beta(A)}\frac{\|(d-(d-(Ad+q))_{+})\|_{\infty}}{\|( - q)_+\|_{\infty}}.
\end{equation}
\end{theorem}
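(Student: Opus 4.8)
The plan is to obtain the relative bound \eqref{eq9} by combining the absolute error bound of Theorem~\ref{lem2} with the two-sided estimate on $\|z\|_\infty$ furnished by Lemma~\ref{lem1}, simply by dividing inequalities of matching direction. Write $P=\|(d-(d-(Ad+q))_{+})\|_{\infty}$ for the residual of $d$ and $Q=\|(-q)_{+}\|_{\infty}$. The hypothesis $(-q)_{+}\neq 0$ guarantees $Q>0$, and since $A$ is a $P$-matrix both $\beta(A)$ and $\beta(A^{-1})$ are positive; hence every quantity appearing in a denominator below is strictly positive, so dividing preserves the direction of each inequality.

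First I would record the four one-sided estimates I need. Theorem~\ref{lem2} supplies
\[
\frac{P}{1+\|A\|_\infty}\ \le\ \|z-d\|_\infty\ \le\ \frac{1+\|A\|_\infty}{\beta(A)}\,P ,
\]
while Lemma~\ref{lem1} supplies
\[
\beta(A^{-1})\,Q\ \le\ \|z\|_\infty\ \le\ \beta(A)^{-1}Q .
\]

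For the upper bound on the relative error I would divide the upper estimate of the numerator by the lower estimate of the denominator:
\[
\frac{\|z-d\|_\infty}{\|z\|_\infty}\ \le\ \frac{\frac{1+\|A\|_\infty}{\beta(A)}\,P}{\beta(A^{-1})\,Q}\ =\ \frac{1+\|A\|_\infty}{\beta(A)\,\beta(A^{-1})}\,\frac{P}{Q},
\]
which is exactly the right-hand side of \eqref{eq9}. For the lower bound I would instead divide the lower estimate of the numerator by the upper estimate of the denominator:
\[
\frac{\|z-d\|_\infty}{\|z\|_\infty}\ \ge\ \frac{\frac{P}{1+\|A\|_\infty}}{\beta(A)^{-1}Q}\ =\ \frac{\beta(A)}{1+\|A\|_\infty}\,\frac{P}{Q},
\]
matching the left-hand side of \eqref{eq9}.

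Because both ingredients are already established, there is no genuine analytic difficulty here; the statement is essentially a corollary of Theorem~\ref{lem2} and Lemma~\ref{lem1}. The only point requiring care is the correct pairing of bounds: upper-over-lower to produce the upper estimate and lower-over-upper to produce the lower estimate, together with the standing positivity of $Q$, $\beta(A)$, and $\beta(A^{-1})$ that legitimizes the division without flipping any inequality. I would therefore present the argument in these two short lines and conclude.
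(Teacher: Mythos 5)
Your proposal is correct and follows exactly the paper's own proof: the paper likewise combines the two-sided absolute bound of Theorem~\ref{lem2} with the solution-norm estimate of Lemma~\ref{lem1} and divides the matching inequalities. If anything, you are more careful than the paper in spelling out the positivity of $\|(-q)_+\|_\infty$, $\beta(A)$, and $\beta(A^{-1})$ and the correct pairing of upper and lower estimates.
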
 
\begin{proof}
 From theorem \ref{lem2}, we obtain,
 \begin{equation*}\label{eq8}
    \frac{1}{1+\|A\|_{\infty}}\|(d-(d-(Ad+q))_{+})\|_{\infty}\leq \|z-d\|_{\infty}\leq \frac{1+\|A\|_{\infty}} {\beta(A)}\|(d-(d-(Ad+q))_{+})\|_{\infty}.
\end{equation*}
and from lemma \ref{lem1}, it is given that,
\begin{equation*}\label{eq7}
{\beta(A^{-1})}\|(- q)_{+}\|_{\infty} \leq \|z\|_{\infty}\leq  {\beta(A)^{-1}}\|(- q)_{+}\|_{\infty}.
\end{equation*}
Now combining these two inequalities, we obtain

\begin{equation*}
    \frac{\beta(A)}{1+\|A\|_{\infty}}\frac{\|(d-(d-(Ad+q))_{+})\|_{\infty}}{\|( - q)_+\|_{\infty}}
\end{equation*}

\begin{equation*}\label{eq9}
\leq \frac{\|z-d\|_{\infty}}{\|z\|_{\infty}}\leq \frac{1+\|A\|_{\infty}}{\beta(A^{-1})\beta(A)}\frac{\|(d-(d-(Ad+q))_{+})\|_{\infty}}{\|( - q)_+\|_{\infty}}.
\end{equation*}
 
\end{proof}

\begin{theorem}\label{cor1}
Let $A\in R^{n\times n}$ be a $P$-matrix. Then
\begin{equation*}
    \beta(A)\leq \sigma(A),
\end{equation*}
where $\sigma(A)=\min\{\gamma(A_{\mu\mu}):\mu \subseteq \{1,2,\ldots ,n\} \},$ \ $\gamma(A_{\mu \mu})$ denotes the smallest  eigenvalue of the principal submatrix $A_{\mu \mu}$.
\end{theorem}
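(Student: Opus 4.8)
The plan is to prove the stronger pointwise statement $\beta(A) \le \gamma(A_{\mu\mu})$ for every index set $\mu \subseteq \{1,2,\ldots,n\}$; taking the minimum over $\mu$ on the right-hand side then yields $\beta(A) \le \sigma(A)$ at once. Since $\beta(A)$ is defined as a minimum of $\max_i z_i(Az)_i$ over all $z$ with $\|z\|_{\infty} = 1$, it suffices, for each fixed $\mu$, to exhibit a single admissible vector $z$ for which $\max_i z_i(Az)_i$ equals $\gamma(A_{\mu\mu})$.

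First I would recall that every principal submatrix of a $P$-matrix is again a $P$-matrix, and that any real eigenvalue of a $P$-matrix is strictly positive: if $A_{\mu\mu}v = \gamma v$ with real $v \ne 0$, then $\max_i v_i(A_{\mu\mu}v)_i = \gamma \max_i v_i^2 > 0$ by the $P$-matrix property of $A_{\mu\mu}$, which forces $\gamma > 0$. Let $\gamma = \gamma(A_{\mu\mu})$ with associated real eigenvector $v$, normalized so that $\|v\|_{\infty} = 1$. Define $z \in R^n$ by setting $z_i = v_i$ for $i \in \mu$ and $z_i = 0$ for $i \notin \mu$; then $\|z\|_{\infty} = 1$, so $z$ is admissible in the definition of $\beta(A)$.

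The key computation is to evaluate $z_i(Az)_i$ coordinatewise. For $i \notin \mu$ the factor $z_i = 0$ kills the term, while for $i \in \mu$, because $z$ vanishes off $\mu$, one has $(Az)_i = \sum_{j \in \mu} A_{ij} v_j = (A_{\mu\mu} v)_i = \gamma v_i$, so that $z_i(Az)_i = \gamma v_i^2$. Hence
\[
\max_i z_i(Az)_i = \max\Big(0,\ \gamma \max_{i \in \mu} v_i^2\Big) = \gamma \|v\|_{\infty}^2 = \gamma,
\]
where positivity of $\gamma$ guarantees that the eigenvector terms dominate the zero terms and the normalization $\|v\|_{\infty} = 1$ makes the value exactly $\gamma$. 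Therefore $\beta(A) \le \max_i z_i(Az)_i = \gamma(A_{\mu\mu})$, and minimizing over $\mu$ completes the argument.

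The step requiring the most care is the one flagged above: for the coordinatewise computation to close, the eigenvalue $\gamma(A_{\mu\mu})$ and its eigenvector must be taken real, since only a real $v$ extends to an admissible test vector and only then does $z_i(Az)_i = \gamma v_i^2$ hold. The positivity of the real eigenvalues of the $P$-submatrix $A_{\mu\mu}$ is precisely what then lets me discard the zero coordinates and read off the maximum as exactly $\gamma$; without it the off-$\mu$ entries could force the maximum to $0$ and degrade the bound.
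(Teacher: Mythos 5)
Your proof is correct, and it takes a genuinely more direct route than the paper's. The paper argues indirectly: it asserts that, by the definition of $\sigma(A)$, the shifted matrix $A-\sigma(A)I$ cannot be a $P$-matrix (an assertion it leaves unjustified), invokes the definitional characterization of $P$-matrices to get a vector $y\neq 0$ with $\max_i y_i((A-\sigma(A)I)y)_i\leq 0$, normalizes $\|y\|_{\infty}=1$, and concludes $\max_i y_i(Ay)_i\leq \sigma(A)$, hence $\beta(A)\leq\sigma(A)$. You instead fix each index set $\mu$, take a real unit-$\infty$-norm eigenvector $v$ of $A_{\mu\mu}$ for $\gamma=\gamma(A_{\mu\mu})$, extend by zeros to an admissible $z$, and evaluate $\|z\|_A^2=\gamma$ exactly. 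This buys two things: you obtain the stronger pointwise bound $\beta(A)\leq\gamma(A_{\mu\mu})$ for every $\mu$ (the paper only gets the minimum), and your eigenvector computation is precisely the missing justification for the paper's unproved claim that $A-\sigma(A)I$ fails to be a $P$-matrix --- the minimizing submatrix of the shifted matrix has $0$ as a real eigenvalue, and the zero-extension of its eigenvector witnesses the failure. Your insistence on realness is also a genuine sharpening: for nonsymmetric $A$ the ``smallest eigenvalue'' in the statement is only meaningful as the smallest \emph{real} eigenvalue of a principal submatrix (as in Mathias--Pang; such eigenvalues exist since the $1\times 1$ submatrices contribute the diagonal entries), and both your argument and the paper's need this. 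Note, too, that your use of $\gamma>0$ to discard the zero coordinates plays the same role as the paper's silent use of $\sigma(A)\geq 0$ when passing from $y_i(Ay)_i\leq\sigma(A)y_i^2$ to $\max_i y_i(Ay)_i\leq\sigma(A)$.

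One cosmetic repair: in your positivity argument you write $\max_i v_i(A_{\mu\mu}v)_i=\gamma\max_i v_i^2$, but pulling $\gamma$ out of the maximum already presupposes $\gamma\geq 0$, which is what you are proving. Argue instead that if $\gamma\leq 0$ then every term $\gamma v_i^2$ is nonpositive, so $\max_i v_i(A_{\mu\mu}v)_i\leq 0$, contradicting the $P$-property of $A_{\mu\mu}$. With that rewording the proof is complete.
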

\begin{proof}
 Let  $\sigma(A)=\min\{\gamma(A_{\mu\mu}):\mu \subseteq \{1,2,\ldots ,n\} \},$ \ $\gamma(A)$ denotes the smallest  eigenvalue of the principal submatrix $A_{\mu \mu}$. By this definition of $\sigma(A)$, the matrix $(A-\sigma(A)I)$ cannot be a $P$-matrix. Then, there exists a vector ${y}$  such that
 \begin{equation*}
     \max_{i}{y}_{i}((A-\sigma(A) I){y})_{i}\leq 0.
 \end{equation*}
 This implies that
 \begin{center}
$ \max\limits_{i} ({y}_{i}(A{y})_{i}-\sigma(A){{y}_i}^2)\leq 0.$\\
 \end{center}
 \begin{center}
 With $\|{y}\|_{\infty}=1,$ \ $\max\limits_{i} ({y}_{i}(A{y})_{i})\leq \sigma(A).$
 \end{center}
 Now introducing minimum in both side of the above inequality, we obtain,
 \begin{center}
     $\min\limits_{\|{y}\|_{\infty}=1} \max\limits_{i} {y}_{i}(A{y})_{i}\leq\sigma(A).$\\
 \end{center}
 This implies that
 \begin{equation*}
     \alpha(A)\leq \|y\|^2_A\leq \sigma(A).
 \end{equation*}
\end{proof}
\begin{corol}\label{cor2}
Let $A\in R^{n \times n}$ be a nondiagonal $P$-matrix. Let $\lambda \in (0,1)$ be arbitrary. Now define the scalars 
\begin{equation*}
    m=\max_{i\neq j}|A_{ij}|, ~~~~~ h=\frac{m^2}{\sigma(A)}.
\end{equation*}
Define the numbers $\{t_{i}: i \in \{1,\ldots,n\}\}$  
\begin{equation*}
    t_1=\min\{\sigma(A),\lambda h\},
\end{equation*}
\begin{equation*}
    t_{i+1}=\frac{(1-\lambda)^2t^2_i}{h}~~~~~~~ for~~~~~~ i\geq 1.
\end{equation*}
\noindent Then $\beta(A)\geq t_{n}$.
\end{corol}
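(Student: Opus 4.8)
The plan is to prove the equivalent variational statement. Since $\beta(A)=\min_{\|z\|_\infty=1}\max_i z_i(Az)_i$ by ($\ref{def1}$), it suffices to show $\max_i z_i(Az)_i\ge t_n$ for every $z$ with $\|z\|_\infty=1$. Two structural facts about a $P$-matrix drive the argument. First, each diagonal entry is itself a $1\times 1$ principal submatrix, so $A_{ii}=\gamma(A_{\{i\}\{i\}})\ge\sigma(A)$. Second, for every index set $\mu$ the Rayleigh-type bound $z_\mu^{T}A_{\mu\mu}z_\mu\ge\gamma(A_{\mu\mu})\|z_\mu\|_2^2\ge\sigma(A)\|z_\mu\|_2^2$ holds, which is the way $\sigma$ enters the estimates. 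I would also record at the start that $\{t_i\}$ is positive and nonincreasing: since $t_1\le\lambda h<h$ and $t_{i+1}=(1-\lambda)^2 t_i^2/h=(1-\lambda)^2 t_i\,(t_i/h)$ with $t_i/h<1$, induction gives $t_{i+1}<(1-\lambda)^2 t_i<t_i$, so $t_n$ is the smallest and hence the binding bound.

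I would then run an induction that grows a set $\mu_k$ of dominant coordinates with $|\mu_k|\ge k$. For the base case, let $p$ satisfy $|z_p|=1$ and expand $z_p(Az)_p=A_{pp}z_p^2+z_p\sum_{j\ne p}A_{pj}z_j\ge\sigma(A)-m\sum_{j\ne p}|z_j|$, using $A_{pp}\ge\sigma(A)$ and $|A_{pj}|\le m$. If the off-diagonal mass $\sum_{j\ne p}|z_j|$ is below the threshold dictated by $\lambda$, this already yields $W:=\max_i z_i(Az)_i\ge t_1\ge t_n$ and we are done; otherwise that mass is large enough that some coordinate outside $\{p\}$ must itself be large, which seeds $\mu_1$. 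The $\min$ defining $t_1=\min\{\sigma(A),\lambda h\}$ is precisely this dichotomy between the regime where $\sigma(A)$ is small compared with $m$ and the regime where the first dominant coordinate already forces the value $\lambda h$.

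For the inductive step I would sum the products over the current dominant set and split off the coupling: $\sum_{i\in\mu_k}z_i(Az)_i=z_{\mu_k}^{T}A_{\mu_k\mu_k}z_{\mu_k}+\sum_{i\in\mu_k}z_i\sum_{j\notin\mu_k}A_{ij}z_j\ge\sigma(A)\|z_{\mu_k}\|_2^2-m\|z_{\mu_k}\|_1\|z_{\nu_k}\|_1$, where $\nu_k$ is the complement. Arguing by contradiction from $W<t_{k+1}$, the left side is at most $|\mu_k|\,W$, so $m\|z_{\mu_k}\|_1\|z_{\nu_k}\|_1\ge\sigma(A)\|z_{\mu_k}\|_2^2-|\mu_k|W$ must be large; this forces the uncaptured mass $\|z_{\nu_k}\|_1$ to be bounded below, so a new coordinate of guaranteed size joins the set as $\mu_{k+1}$. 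Tracking that guaranteed magnitude through the quadratic diagonal gain $\sigma(A)\|z_\mu\|_2^2$ against the coupling linear in $m$ is a complete-the-square step, and this bookkeeping is what produces the recursion $t_{k+1}=(1-\lambda)^2 t_k^2/h$; the factor $(1-\lambda)^2$ is the squared margin one deliberately retains when separating the genuinely dominant coordinates from the residual mass. Because there are only $n$ coordinates, $\mu_k$ cannot grow past size $n$, so the process must terminate by stage $n$ with $W\ge t_n$.

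The main obstacle is the inductive cross-term estimate: extracting a single new dominant coordinate (rather than merely a lower bound on an aggregate of small coordinates) while losing exactly the factor recorded in the recursion. The delicate points are choosing the capture threshold so that the coupling $m\|z_{\mu}\|_1\|z_{\nu}\|_1$ is dominated by the diagonal gain $\sigma(A)\|z_{\mu}\|_2^2$, and calibrating $\lambda$ so the retained margin matches $(1-\lambda)^2$. Everything else, namely the two structural $P$-matrix facts and the monotonicity of $\{t_i\}$, is routine.
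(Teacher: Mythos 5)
Your proposal has a fatal flaw at its core: the ``Rayleigh-type bound'' $z_\mu^{T}A_{\mu\mu}z_\mu\ge\gamma(A_{\mu\mu})\|z_\mu\|_2^2\ge\sigma(A)\|z_\mu\|_2^2$ is false for nonsymmetric matrices. The quantity $\gamma(A_{\mu\mu})$ is the smallest eigenvalue of a \emph{nonsymmetric} principal submatrix; a quadratic form is controlled by the smallest eigenvalue of the symmetric part $\frac{1}{2}(A_{\mu\mu}+A_{\mu\mu}^{T})$, which for a $P$-matrix can be negative and is in no way bounded below by $\sigma(A)$. Concretely, $A=\left[\begin{smallmatrix}1 & 10\\ 0 & 1\end{smallmatrix}\right]$ is a nondiagonal $P$-matrix with $\sigma(A)=1$, yet $z=(1,-1)^{T}$ gives $z^{T}Az=-8$, far below $\sigma(A)\|z\|_2^2=2$. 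The same example shows that your plan of lower-bounding the \emph{sum} $\sum_{i\in\mu_k}z_i(Az)_i$ is structurally wrong, not just unproven: for this $z$ the sum is $-8$ while $\max_i z_i(Az)_i=1$. The $P$-matrix/$\sigma$ structure controls only the max functional, never the aggregated quadratic form --- notice that this is exactly how $\sigma$ is used in the proof of Theorem \ref{cor1} (via ``$A-\sigma(A)I$ cannot be a $P$-matrix,'' applied to $\max_i y_i((A-\sigma(A)I)y)_i$), so your inductive step has no valid starting inequality.

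There is a second, independent gap, which you yourself flag as ``the main obstacle'' but do not resolve: extracting a \emph{single} new dominant coordinate of the precise magnitude the recursion requires. From a lower bound on the aggregate mass $\|z_{\nu_k}\|_1$ one can only conclude that some coordinate has size at least $\|z_{\nu_k}\|_1/(n-k)$, and these dimension factors would contaminate the recursion, whereas the stated $t_{i+1}=(1-\lambda)^2t_i^2/h$ contains no factor of $n$. Since deriving exactly these constants is the entire content of the corollary, the proposal as written is a strategy sketch with its decisive steps missing and its one quantitative tool invalid. For reference, the paper itself states this corollary without proof --- it is imported from Mathias and Pang \cite{mathias1990error}, where the argument is run coordinatewise through the products $z_i(Az)_i$ (diagonal term bounded via $A_{ii}\ge\sigma(A)$ for one large coordinate at a time, coupling bounded via $m$, thresholds calibrated by $\lambda$), not through quadratic forms over index sets. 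The pieces of your sketch that do survive --- $A_{ii}\ge\sigma(A)$, the monotonicity of $\{t_i\}$, and the base-case dichotomy behind $t_1=\min\{\sigma(A),\lambda h\}$ --- are consistent with that route, but they are the routine parts.
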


\begin{corol}\label{cor3}
Let $A\in R^{n \times n}$ be an $H$-matrix with the positive diagonals. Let $\bar{A}$
be the comparison matrix of $A$, which is defined by
 \begin{equation*}
         \bar{A}_{ij} =
            \begin{cases}
             A_{ii} & \text{ if $i= j$},\\
               -|A_{ij}| & \text{if $i\neq j$ }.
         \end{cases}
        \end{equation*}\label{18}
        Then, for any vector $e>0$, the vector $e_1=\bar{A}^{-1}e>0$ and
         \begin{equation*}\label{eq18}
         \beta(A)\geq\frac{(\min_ie_i)(\min_i{e_1}_i)}{(\min_j{e_1}_j)^2}.
         \end{equation*}
         \end{corol}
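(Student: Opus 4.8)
The plan is to exploit the $M$-matrix structure of the comparison matrix $\bar{A}$ to produce a positive scaling vector, and then run the extremal-index argument underlying the definition (\ref{def1}) of $\beta(A)$, but weighted by that scaling. First I would settle the positivity claim: since $A$ is an $H$-matrix with positive diagonal, $\bar{A}$ is a nonsingular $M$-matrix, so $\bar{A}^{-1}\ge 0$ entrywise. Being invertible, $\bar{A}^{-1}$ has no zero row, so every row is nonnegative with at least one strictly positive entry; pairing such a row with the strictly positive $e$ gives $(e_1)_i=(\bar{A}^{-1}e)_i>0$ for every $i$, i.e. $e_1=\bar{A}^{-1}e>0$. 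The crucial consequence to record is the identity $\bar{A}e_1=e$, which read componentwise is the weighted diagonal dominance
\[
A_{ii}(e_1)_i-\sum_{j\neq i}|A_{ij}|(e_1)_j=e_i>0,\qquad i=1,\dots,n .
\]

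By homogeneity it suffices to bound $\max_i z_i(Az)_i$ from below for an arbitrary $z$ with $\|z\|_\infty=1$. The key device is to pick not the index where $|z_i|$ is largest, but the index $k$ maximizing the ratio $|z_j|/(e_1)_j$, so that $|z_j|\le\big(|z_k|/(e_1)_k\big)(e_1)_j$ for all $j$. Splitting off the diagonal term and using this bound together with the identity above yields
\[
z_k(Az)_k\ge A_{kk}z_k^2-|z_k|\sum_{j\neq k}|A_{kj}||z_j|\ge z_k^2\Big(A_{kk}-\tfrac{1}{(e_1)_k}\sum_{j\neq k}|A_{kj}|(e_1)_j\Big)=z_k^2\,\frac{e_k}{(e_1)_k}.
\]

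Finally I would convert $z_k^2$ into $\|z\|_\infty^2$: if $|z_m|=\|z\|_\infty$, maximality of $k$ gives $|z_k|/(e_1)_k\ge|z_m|/(e_1)_m\ge\|z\|_\infty/\max_j(e_1)_j$, hence $z_k^2\ge(e_1)_k^2\|z\|_\infty^2/(\max_j(e_1)_j)^2$. Substituting and then using $e_k\ge\min_i e_i$ and $(e_1)_k\ge\min_i(e_1)_i$ gives
\[
\max_i z_i(Az)_i\ge z_k(Az)_k\ge\frac{(\min_i e_i)(\min_i(e_1)_i)}{(\max_j(e_1)_j)^2}\,\|z\|_\infty^2,
\]
and taking the minimum over $\|z\|_\infty=1$ closes the argument. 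The main obstacle is precisely the choice of the weighted extremal index $k$: it is what allows the off-diagonal sum to be absorbed by the $M$-matrix identity, and it is also what forces $\max_j(e_1)_j$ rather than $\min_j(e_1)_j$ into the denominator; I therefore expect the $\min$ in the denominator of the stated bound to be a typographical slip for $\max$, with the displayed derivation delivering the latter.
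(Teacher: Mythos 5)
Your proposal is correct, and in fact the paper states this corollary without any proof at all (it is imported from the Mathias--Pang reference \cite{mathias1990error}), so there is nothing in the paper to compare against; your argument is essentially the classical one from that source. The two ingredients you isolate are exactly right: the $M$-matrix property of the comparison matrix $\bar{A}$ gives $\bar{A}^{-1}\geq 0$ with no zero row, hence $e_1=\bar{A}^{-1}e>0$ and the scaled dominance identity $A_{ii}(e_1)_i-\sum_{j\neq i}|A_{ij}|(e_1)_j=e_i$; and the weighted extremal index $k$ maximizing $|z_j|/(e_1)_j$ is precisely what lets the off-diagonal sum be absorbed, since $|z_j|\leq \big(|z_k|/(e_1)_k\big)(e_1)_j$ turns $z_k(Az)_k\geq A_{kk}z_k^2-|z_k|\sum_{j\neq k}|A_{kj}||z_j|$ into $z_k(Az)_k\geq z_k^2\,e_k/(e_1)_k$, after which $|z_k|\geq (e_1)_k\|z\|_\infty/\max_j(e_1)_j$ closes the estimate.

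Your diagnosis of the denominator is also correct: the $\min_j({e_1})_j$ in the displayed statement must be $\max_j(e_1)_j$, and this is not merely a suspicion, it can be refuted directly. Take $A=\mathrm{diag}(a_1,\ldots,a_n)$ with positive, non-equal $a_i$ and $e$ the vector of ones; then $\bar{A}=A$, $(e_1)_i=1/a_i$, and the bound as printed would read
\begin{equation*}
\beta(A)\;\geq\;\frac{(\min_i e_i)(\min_i (e_1)_i)}{(\min_j (e_1)_j)^2}\;=\;\frac{1}{\min_j (1/a_j)}\;=\;\max_j a_j,
\end{equation*}
contradicting $\beta(A)=\min_i A_{ii}$, which the paper itself proves for diagonal $P$-matrices in Theorem \ref{corr1}. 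With $\max_j(e_1)_j$ in the denominator the same example gives $\beta(A)\geq\min_j a_j$, which is sharp. So your proof is complete and the corrected statement is the one your derivation delivers.
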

         The upper and lower boundary of the term $\beta (A)$ are established by the above corollaries. Now we study the error bound related to diagonal $P$-matrix. 
         \begin{theorem}\label{corr1}
         Let $A\in R^{n \times n}$ be a diagonal $P$-matrix. Let $z\in R^n$ be the unique solution of LCP$(A,q)$ and $d\in R^n$ be an arbitrary vector. Then \\
          $\frac{1}{1+\|A\|_{\infty}}\|(d-(d-(Ad+q))_{+})\|_{\infty}\leq \|z-d\|_{\infty}\leq \frac{1+\|A\|_{\infty}} {\min_{i} A_{ii}}\|(d-(d-(Ad+q))_{+})\|_{\infty},$ where $A_{ii}$ is the $i$-th diagonal element of the matrix $A$.
        
         \end{theorem}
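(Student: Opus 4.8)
The plan is to derive this statement as an immediate specialization of Theorem \ref{lem2} to the diagonal setting; the sole new ingredient is the explicit evaluation $\beta(A) = \min_i A_{ii}$ for a diagonal $P$-matrix. Once this identity is in hand, substituting it for $\beta(A)$ in the upper bound of ($\ref{eqq8}$) yields exactly the claimed right-hand inequality, while the left-hand inequality of ($\ref{eqq8}$) already has the desired form, as it involves only $\|A\|_\infty$.

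First I would note that a diagonal $P$-matrix has strictly positive diagonal entries: for a diagonal matrix every principal minor is a product of diagonal entries, so the defining positivity of all principal minors forces $A_{ii} > 0$ for each $i$. Since $A$ is diagonal, $(Az)_i = A_{ii} z_i$, and therefore $z_i(Az)_i = A_{ii} z_i^2$ and $\|z\|^2_A = \max_i A_{ii} z_i^2$ for every $z \in R^n$.

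The key step is to evaluate $\beta(A) = \min_{\|z\|_\infty = 1} \max_i A_{ii} z_i^2$ from the definition ($\ref{def1}$). The bound $\beta(A) \leq \min_i A_{ii}$ already follows from Theorem \ref{cor1}, since for a diagonal matrix each principal submatrix $A_{\mu\mu}$ is diagonal with smallest eigenvalue $\min_{i \in \mu} A_{ii}$, so that $\sigma(A) = \min_i A_{ii}$; alternatively, taking $z$ with $z_k = 1$ and $z_i = 0$ for $i \neq k$, where $A_{kk} = \min_i A_{ii}$, gives $\|z\|_\infty = 1$ and $\max_i A_{ii} z_i^2 = A_{kk}$. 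For the reverse inequality, any $z$ with $\|z\|_\infty = 1$ satisfies $|z_j| = 1$ for some index $j$, so $\max_i A_{ii} z_i^2 \geq A_{jj} z_j^2 = A_{jj} \geq \min_i A_{ii}$; minimizing over such $z$ gives $\beta(A) \geq \min_i A_{ii}$. Combining the two directions yields $\beta(A) = \min_i A_{ii}$, and substituting this into the conclusion of Theorem \ref{lem2} completes the argument. I do not expect a genuine obstacle here; the only point demanding care is establishing both directions of the equality $\beta(A) = \min_i A_{ii}$, since a one-sided bound would deliver only one of the two required inequalities.
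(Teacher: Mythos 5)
Your proposal is correct and follows essentially the same route as the paper: both establish $\beta(A)=\min_i A_{ii}$ by combining the lower bound $\beta(A)\geq \min_i A_{ii}$ from the definition ($\ref{def1}$) with the upper bound $\beta(A)\leq \sigma(A)=\min_i A_{ii}$ from Theorem \ref{cor1}, and then substitute into Theorem \ref{lem2}. Your additional direct verification of the upper bound via a coordinate vector is a harmless (and slightly more self-contained) supplement, but it does not change the argument's structure.
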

         \begin{proof}
         Let $A\in R^{n \times n}$ be a diagonal $P$-matrix. Then 
         \begin{center}
         $\max\limits_{i} A_{ii}=\max\limits_{\|x\|_{\infty}=1, i} A_{ii}{x_i}^2\geq \min\limits_{i} A_{ii}>0,$
         \end{center}
         where $A_{ii}$ is the $i$-th diagonal element of the matrix $A$.
         
         By definition \ref{def1},
         \begin{center}
          $\beta(A)=\min\limits_{\|z\|_{\infty}=1} \|z\|^{2}_A =\min\limits_{\|z\|_{\infty}=1} \max\limits_{ i }z_{i}(Az)_{i}\geq \min\limits_{i}A_{ii}$ 
          \end{center}
          and by theorem \ref{cor1},
          \begin{center}
              $\beta(A)\leq \sigma(A)$.
              \end{center}
              For diagonal $P$-matrix $A$, 
              \begin{center}
              $\sigma (A)=\min\limits_{i}A_{ii}.$
              \end{center}
              Hence  
              \begin{center}
          $\beta (A) \leq \min\limits_{i}A_{ii}$.
          \end{center}
           Both the inequalities imply that
           \begin{center}
           $\beta(A)=\min\limits_{i} A_{ii},$ 
           \end{center}
           where $A_{ii}$ is the $i$-th diagonal element of the matrix $A$. Let $z\in R^n$ be the unique solution of LCP$(A,q)$ and $d\in R^n$ be an arbitrary vector. From theorem \ref{lem2}, we obtain the following inequality\\
           \begin{center}
           $\frac{1}{1+\|A\|_{\infty}}\|(d-(d-(Ad+q))_{+})\|_{\infty}\leq \|z-d\|_{\infty}\leq \frac{1+\|A\|_{\infty}} {\beta(A)}\|(d-(d-(Ad+q))_{+})\|_{\infty}.$ 
           \end{center}
           Now using the value of  $\beta(A)$ in the inequality \ref{eqq8}, we obtain  the following inequality\\
           \begin{center}
         $\frac{1}{1+\|A\|_{\infty}}\|(d-(d-(Ad+q))_{+})\|_{\infty}\leq \|z-w\|_{\infty}\leq \frac{1+\|A\|_{\infty}} {\min\limits_{i} A_{ii}}\|(d-(d-(Ad+q))_{+})\|_{\infty}$,
         \end{center}
         where $A_{ii}$ is the $i$-th diagonal element of the matrix $A$.
         \end{proof}
         \begin{theorem}
         Let $A\in R^{n \times n}$ be a diagonal $P$-matrix. Let $z\in R^n$ be the unique solution of LCP$(A,q)$ and $d\in R^n$ be an arbitrary vector. Then the relative error satisfies the following inequality\\
          \begin{equation*}
    \frac{{\min\limits_{i} A_{ii}}}{1+\|A\|_{\infty}}\frac{\|(d-(d-(Ad+q))_{+})\|_{\infty}}{\|( - q)_+\|_{\infty}}
\end{equation*}

\begin{equation}\label{eq9}
\leq \frac{\|z-d\|_{\infty}}{\|z\|_{\infty}}\leq \frac{1+\|A\|_{\infty}}{(\frac{1}{\max\limits_{i} A_{ii}}){\min\limits_{i} A_{ii}}}\frac{\|(d-(d-(Ad+q))_{+})\|_{\infty}}{\|( - q)_+\|_{\infty}},
\end{equation} where $A_{ii}$ is the $i$-th diagonal element of the matrix $A$.
\end{theorem}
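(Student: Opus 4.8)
The plan is to obtain this relative error bound as a direct specialization of the general relative bound in Theorem \ref{lem3}, by inserting the explicit values of $\beta(A)$ and $\beta(A^{-1})$ that become available in the diagonal case. Recall that Theorem \ref{lem3} gives, for any $P$-matrix,
\[
\frac{\beta(A)}{1+\|A\|_{\infty}}\frac{\|(d-(d-(Ad+q))_{+})\|_{\infty}}{\|(-q)_+\|_{\infty}}
\leq \frac{\|z-d\|_{\infty}}{\|z\|_{\infty}}
\leq \frac{1+\|A\|_{\infty}}{\beta(A^{-1})\beta(A)}\frac{\|(d-(d-(Ad+q))_{+})\|_{\infty}}{\|(-q)_+\|_{\infty}},
\]
so the task reduces to evaluating the two factors $\beta(A)$ and $\beta(A^{-1})$ when $A$ is a diagonal $P$-matrix.

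First I would recall from the proof of Theorem \ref{corr1} that for a diagonal $P$-matrix $A$ one has $\beta(A)=\min_i A_{ii}$; here the $P$-matrix hypothesis forces $A_{ii}>0$ for every $i$, so this quantity is positive. Second, I would compute $\beta(A^{-1})$. Since $A$ is diagonal with positive diagonal, its inverse $A^{-1}$ is again a diagonal $P$-matrix whose $i$-th diagonal entry is $1/A_{ii}$. Applying verbatim the computation used in Theorem \ref{corr1}, but now to $A^{-1}$ in place of $A$, yields $\beta(A^{-1})=\min_i (1/A_{ii})=1/\max_i A_{ii}$.

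Third I would substitute these two evaluations into the displayed inequality of Theorem \ref{lem3}. The lower coefficient $\beta(A)/(1+\|A\|_{\infty})$ becomes $\min_i A_{ii}/(1+\|A\|_{\infty})$, while the upper coefficient $(1+\|A\|_{\infty})/(\beta(A^{-1})\beta(A))$ becomes $(1+\|A\|_{\infty})/((1/\max_i A_{ii})\min_i A_{ii})$. This reproduces exactly the claimed two-sided bound for $\|z-d\|_{\infty}/\|z\|_{\infty}$.

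The argument involves no genuine obstacle: everything rests on the general relative bound of Theorem \ref{lem3} together with the identity $\beta(A)=\min_i A_{ii}$ already established in Theorem \ref{corr1}. The only new ingredient worth spelling out is the evaluation $\beta(A^{-1})=1/\max_i A_{ii}$, and this is entirely parallel to the evaluation of $\beta(A)$, using only that the inverse of a positive diagonal matrix is again positive diagonal with reciprocal diagonal entries.
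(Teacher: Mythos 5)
Your proposal is correct and follows essentially the same route as the paper: the paper likewise specializes Theorem \ref{lem3} by substituting $\beta(A)=\min_i A_{ii}$ from Theorem \ref{corr1} and evaluating $\beta(A^{-1})=\min_i (A^{-1})_{ii}=1/\max_i A_{ii}$ using that $A^{-1}$ is again a positive diagonal matrix. Your write-up is in fact slightly more careful, since you justify the evaluation of $\beta(A^{-1})$ by repeating the Theorem \ref{corr1} argument rather than merely asserting it.
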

\begin{proof}
 Let $A\in R^{n \times n}$ be a diagonal $P$-matrix. By theorem \ref{corr1}, it is clear that $\beta(A)=\min\limits_{i} A_{ii},$ where $A_{ii}$ is the $i$-th diagonal element of the matrix $A$. Since $A$ is a diagonal matrix, $\beta(A^{-1})=\min\limits_{i} ({A^{-1}})_{ii}=\frac{1}{\max\limits_{i} A_{ii}}.$ Now from theorem \ref{lem3}, we obtain \\
  \begin{equation*}
    \frac{{\min\limits_{i} A_{ii}}}{1+\|A\|_{\infty}}\frac{\|(d-(d-(Ad+q))_{+})\|_{\infty}}{\|( - q)_+\|_{\infty}}
\end{equation*}

\begin{equation}\label{eq9}
\leq \frac{\|z-d\|_{\infty}}{\|z\|_{\infty}}\leq \frac{1+\|A\|_{\infty}}{(\frac{1}{\max\limits_{i} A_{ii}}){\min\limits_{i} A_{ii}}}\frac{\|(d-(d-(Ad+q))_{+})\|_{\infty}}{\|( - q)_+\|_{\infty}},
\end{equation} where $z\in R^n$ be the unique solution of LCP$(A,q)$, $d\in R^n$ be an arbitrary vector and $A_{ii}$ is the $i$-th diagonal element of the matrix $A$.
\end{proof}
\section{Numerical Example}
Consider the matrix $A=\left[\begin{array}{ccc}
4 & 1 & 2\\
3 & 5 & -1\\
-1 & -2 & 7\\
\end{array}\right]$, which is a $P$-matrix.\\ The principal submatrices of $A$ are $A_{11}=4, A_{22}=5, A_{33}=7,$ $ A_{\alpha \alpha}=\left[\begin{array}{cc}
4 & 1\\
3 & 5 \\
\end{array}\right]$, where $\alpha =\{1,2\},$
$ A_{\beta \beta}=\left[\begin{array}{cc}
5 & -1\\
-2 & 7 \\
\end{array}\right]$, where $\beta =\{2,3\},$
 $ A_{\delta \delta}=\left[\begin{array}{cc}
4 & 2\\
-1 & 7 \\
\end{array}\right]$, where $\delta =\{1,3\}.$
\vsp
\NI Now $\gamma(A_{11})=4, \gamma(A_{22})=5, \gamma(A_{33})=7, \gamma(A_{\alpha \alpha})=\frac{9-\sqrt{13}}{2}, \gamma(A_{\beta \beta})=\frac{12-\sqrt{12}}{2}, \gamma(A_{\delta \delta})=4.5, $ where $\gamma $ is defined by theorem \ref{cor1}. \\
\vsp
\NI Hence $\sigma(A)= \min\{4, 5, 7, 4.5, {\frac{9-\sqrt{13}}{2}}, \frac{12-\sqrt{12}}{2}\}= \frac{9-\sqrt{13}}{2}=2.69722436.$\\ Now $m=\max_{i\neq j}|A_{ij}|=3, h=\frac{m^2}{\sigma(A)}=3.33676357.$\\
Let $\lambda=0.5$,
then \\$t_1=\min\{\sigma(A),\lambda h\}$=$\min\{2.69722436,1.66838179\}=1.66838179$.\\$t_2=\frac{(1-\lambda)^2t^2_1}{h}=\frac{0.25*2.7834978}{3.33676357}=0.208547725.$ \\  $t_3=\frac{(1-\lambda)^2t^2_2}{h}=\frac{0.25*0.0434921536}{3.33676357}=0.00325855823.$\\
Therefore $\beta(A)\geq 0.00325855823.$
Again, $\beta(A)\leq {\|z\|_A}^2\leq \sigma(A)=2.69722436.$\\
Here $\|A\|_{\infty}=7$.
Let $z\in R^n$ be the unique solution of LCP$(A,q)$ and $d\in R^n$ be an arbitrary vector. From \ref{eqq8}, we obtain
\begin{center}
$\frac{1}{1+\|A\|_{\infty}}\|(d-(d-(Ad+q))_{+})\|_{\infty}\leq \|z-d\|_{\infty}\leq \frac{1+\|A\|_{\infty}} {\beta(A)}\|(d-(d-(Ad+q))_{+})\|_{\infty}$.
\end{center}
This implies that
\begin{center}
$\frac{1}{8}\|(d-(d-(Ad+q))_{+})\|_{\infty}\leq \|z-d\|_{\infty}\leq \frac{8} {2.69722436}\|(d-(d-(Ad+q))_{+})\|_{\infty}$.
\end{center}
Therefore the error satisfies the inequality,
\begin{center}
$0.125\|(d-(d-(Ad+q))_{+})\|_{\infty}\leq \|z-d\|_{\infty}\leq 2.96601207\|(d-(d-(Ad+q))_{+})\|_{\infty}$. 
\end{center}

\section{Conclusion}
In this study we introduce error bound for LCP$(A,q)$ with $P$-matrix using plus function. We introduce a new residual approach to bound the error as well as the relative error. We also study the error bound for diagonal $P$-matrix. A numerical example is illustrated to demonstrate the upper and lower bound of the error.

\section{Acknowledgment}
 The author Bharat Kumar is thankful to the University Grant Commission (UGC),  Government of India under the JRF programme no.  1068/(CSIR-UGC NET DEC. 2017 ). The author A. Dutta is thankful to the Department of Science and Technology, Govt. of India, INSPIRE Fellowship Scheme for financial support.

\bibliographystyle{plain}
\bibliography{name}
\end{document}